\tikzset{cross/.style={cross out, draw=black, minimum size=2*(#1-\pgflinewidth), inner sep=0pt, outer sep=0pt},
cross/.default={1pt}}
\newtheorem{theorem}{Theorem}[section]
\newtheorem{lemma}{Lemma}[section]
\newtheorem{remark}{Remark}[section]
\date{}
\newcommand{\cA}{{\mathcal A}}
\newcommand{\cC}{{\mathcal C}}
\newcommand{\cD}{{\mathcal D}}
\newcommand{\cT}{{\mathcal T}}
\newcommand{\cU}{{\mathcal U}}
\newcommand{\Z}{{\mathbb{Z}}}
\newcommand{\poly}{{\sf poly}}
\newcommand{\HKR}{{\sf HKR}}
\newcommand{\id}{{\sf id}}
\newcommand{\td}{{\sf td}}
\newcommand{\tI}{{\widetilde{I}}}
\newcommand{\tJ}{{\widetilde{J}}}
\newcommand{\tK}{{\widetilde{K}}}
\newcommand{\dbar}{{\overline{\partial}}}
\title{A remark on the B-model Comparison of Hodge structures}
\author{Junwu Tu~\footnote{Junwu Tu, Institute of Mathematical Sciences, ShanghaiTech University,
Shanghai, 201210, China.}}
\begin{document}
\maketitle

\begin{abstract}
In this note we record a comparison theorem on the B-model variation of semi-infinite Hodge structures. This result is considered a folklore theorem by experts in the field. We only take this opportunity to write it down. Our motivation is to apply it in the study of B-model categorical enumerative invariants. 
\end{abstract}

\paragraph{{\bf B-model comparison of Hodge structures.}} Let $X$ be a smooth and projective variety over $\mathbb{C}$. Then its cohomology group carries an integral polarized Hodge structures. Remarkably, a large part of this structure may be reconstructed only from the derived category of $X$. More precisely, let us denote by $\mathcal{C}_X$ a dg-enhancement of its derived category $D^b\big({\sf Coh}(X)\big)$. By~\cite{LunOrl}, such a dg-enhancement is essentially unique. Following Kontsevich-Soibelman~\cite{KonSoi} and Kazarkov-Kontsevich-Pantev~\cite{KKP}, one obtains a weaker version of Hodge structures
from $\mathcal{C}_X$ which was called nc-Hodge structures. The purpose of this note is to compare the nc-Hodge structures of $\cC_X$ with the classical Hodge structures of $X$.  By a series of works~\cite{Lod,Wei,Wei2,Kel}, we recall the following table of comparison results. 
\medskip
\begin{center}\label{table:1}
{\begin{tabular}{l|l}
\hline
Commutative & Non-commutative\\
\hline
Coherent cohomology $\oplus_{p-q=*} H^q(\Omega^p_X)$ & Hochschild homology $HH_*(\mathcal{C}_X)$ \\
\hline
Coherent cohomology $\oplus_{p+q=*} H^q(\Lambda^pT_X)$ & Hochschild cohomology $HH^*(\mathcal{C}_X)$ \\
\hline
De Rham cohomology $H^*(X,\Omega^\bullet_X),\;\; (*\in \Z/2\Z)$  & Periodic cyclic homology $HP_*(\mathcal{C}_X),\;\; (*\in \Z/2\Z)$ \\
\hline
Hodge filtration $F^\bullet H^*(X,\Omega^\bullet_X)$ & nc-Hodge filtration $F^\bullet HP_*(\mathcal{C}_X)$\\
\hline
\end{tabular}}
\end{center}
\medskip
Let us denote by $T_\poly:= \oplus_p \Lambda^pT_X[-p]$ and $A_\poly := \oplus_p \Omega^p_X[p]$, both endowed with the zero differential. The comparison map in the above table is given by the Hochschild-Konstant-Rosenberg~\cite{HKR} (HKR) isomorphisms:
\begin{align*}
    I^\HKR &: H^*(X,T_\poly) \rightarrow HH^*(\cC_X),\\
    I_{\HKR} &: HH_*(\cC_X) \rightarrow H^*(X,A_\poly).
\end{align*}
Motivated from Tsygan formality~\cite{Sho} in the algebraic case, C\u ald\u araru~\cite{Cal} conjectured the following comparison table.
\medskip
\begin{center}
\scalebox{0.9}{\begin{tabular}{l|l}
\hline
Pairing on $\oplus_{p-q=*} H^q(\Omega^p_X)$ &  Mukai pairing on $HH_*(\mathcal{C}_X)$ \\
\hline
$H^*(X,T_\poly)$ with exterior product & $HH^*(\mathcal{C}_X)$ with cup product\\
\hline
$H^*(X,A_\poly)$ as an $H^*(X,T_\poly)$-module & $HH_*(\cC_X)$ as an $HH^*(\cC_X)$-module \\
\hline
\end{tabular}}
\end{center}
\medskip
The comparison of pairing was proved by Ramadoss~\cite{Ram}. The comparison of product structures and module structures were later proved by Calaque-Rossi-Van den Bergh~\cite{CRV}, see also Calaque-Rossi~\cite{CalRos}. Remarkably the comparison map in C\u ald\u araru's conjecture  is no longer the HKR isomorphism, and the proof~\cite{CRV} requires using Kontsevich's formality map~\cite{Kon}. Explicitly, they are given by the following compositions:
\begin{align*}
    J^*&: H^*(X,T_\poly) \stackrel{\sqrt{\td(X)}}{\longrightarrow} H^*(X,T_\poly) \stackrel{I^\HKR}{\longrightarrow} HH^*(\cC_X),\\
    J_* &: HH_*(\cC_X) \stackrel{I_\HKR}{\longrightarrow} H^*(X,A_\poly) \stackrel{\sqrt{\td(X)}}{\longrightarrow} H^*(X,A_\poly).
\end{align*}
Here $\td(X)$ stands for the Todd class of $X$ defined by the series $\frac{z}{1-e^{-z}}$.  Following~\cite{CalRos,CRV}, let us also define a third pair of isomorphisms by
\begin{align*}
    K^*&: H^*(X,T_\poly) \stackrel{\sqrt{\td'(X)}}{\longrightarrow} H^*(X,T_\poly) \stackrel{I^\HKR}{\longrightarrow} HH^*(\cC_X),\\
    K_* &: HH_*(\cC_X) \stackrel{I_\HKR}{\longrightarrow} H^*(X,A_\poly) \stackrel{\sqrt{\td'(X)}}{\longrightarrow} H^*(X,A_\poly).
\end{align*}
In the above definition, $\td'(X)$ is the modified Todd class defined by the series $\frac{z}{e^{z/2}- e^{-z/2}}$. 

\medskip
The goal of this paper is to further extend the above comparison results. Since the chain-level Hochschild-Konstant-Rosenberg map is compatible intertwines the Connes operator with the de Rham differential, we also obtain an isomorphism denoted by
\[ \tI_{\HKR}: H^*\big(X,(C_\bullet(X)[[u]],b+uB)\big) \rightarrow H^*\big(X,(\Omega_X^\bullet[[u]],ud_{DR})\big).\]
Twisting by the square root of Todd class and the modified Todd class yields
\begin{align*}
\tJ &:H^*\big(X,(C_\bullet(X)[[u]],b+uB)\big) \stackrel{\tI_\HKR}{\longrightarrow} H^*\big(X,(\Omega_X^\bullet[[u]],ud_{DR})\big) \stackrel{\wedge \sqrt{{\sf td}(X)}}{\longrightarrow} H^*\big(X,(\Omega_X^\bullet[[u]],ud_{DR})\big),\\
\tK &:H^*\big(X,(C_\bullet(X)[[u]],b+uB)\big) \stackrel{\tI_\HKR}{\longrightarrow} H^*\big(X,(\Omega_X^\bullet[[u]],ud_{DR})\big) \stackrel{\wedge \sqrt{{\sf td}'(X)}}{\longrightarrow} H^*\big(X,(\Omega_X^\bullet[[u]],ud_{DR})\big).
\end{align*}
Furthermore, let us denote by $\int_X: H^n(X,\omega_X) \rightarrow \mathbb{C}$ the trace map from Lipman~\cite{Lip}. Note that by~\cite{SasTon}, we have
\begin{equation}\label{eq:signs}
\int_X = (-1)^{n(n-1)/2}(\frac{1}{2\pi i})^n\int_{X^{an}},
\end{equation}
with $\int_{X^{an}}: H^{2n}(X^{an},\mathbb{C}) \rightarrow \mathbb{C}$ the analytic integration map.
\begin{theorem}\label{thm:main}
    The following comparison results hold:
    \begin{enumerate}
        \item $\tJ$ intertwines higher residue pairings in the sense that
        \[ \langle \alpha, \beta \rangle_{\sf hres} = (-1)^{n(n+1)/2} \int_X \tJ(\alpha)\wedge \tJ(\beta^\vee),\]
        where $\vee$ is the canonical isomorphism denoted by $\vee:HC^-(\cC_X) \rightarrow HC^-(\cC_X^{\sf op})$. Under the HKR isomorphism, this map corresponds to $(-1)^p$ on the space of $p$-forms.
        \item Both $\tJ$ and $\tK$ intertwines the rational structures.
        \item In the family situation, the map $\tK$ intertwines the Gauss-Manin connection with the Getzler connection.
    \end{enumerate}
\end{theorem}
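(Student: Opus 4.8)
The engine behind all three parts is the chain-level HKR map, which by construction intertwines the twisted differential $b+uB$ with $ud_{DR}$; consequently each assertion becomes a statement about $\mathbb{C}[[u]]$-linear (resp.\ sesquilinear) structures that can be tested against its leading $u^0$-term. At $u=0$ the comparison degenerates to the classical C\u ald\u araru conjecture, whose pairing part is due to Ramadoss and whose product/module part is due to Calaque-Rossi-Van den Bergh, and I would treat those as black boxes and concentrate on propagating them in the $u$-direction. For part (1), I would first recall that the higher residue pairing is the $\mathbb{C}[[u]]$-sesquilinear lift of the Mukai pairing to $HC^-$, sesquilinear in the sense that $f(u)$ is conjugated to $f(-u)$ across the pairing; correspondingly the dual $\beta^\vee$ encodes the flip $u\mapsto -u$ together with the equivalence $\mathcal{C}_X \simeq \mathcal{C}_X^{\op}$, and I would verify directly on the HKR model that this equivalence acts as $(-1)^p$ on $H^q(\Omega^p_X)$. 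The main computation is then the identity $\sqrt{\td(X)}\wedge\sqrt{\td(X)} = \td(X)$, which turns $\int_X \tJ(\alpha)\wedge \tJ(\beta^\vee)$ into $\int_X \tI_{\HKR}(\alpha)\wedge \tI_{\HKR}(\beta^\vee)\wedge \td(X)$, i.e.\ exactly the Todd-twisted integration pairing underlying Hirzebruch-Riemann-Roch and Serre duality. It remains to match constants: the normalization $(-1)^{n(n+1)/2}$ should emerge from combining the sign relating Lipman's trace to analytic integration (the displayed sign formula) with the Koszul signs in Poincar\'e duality, and tracking these cleanly is the only delicate point here.

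\textbf{Part (2).} The rational structure on $HP_*(\mathcal{C}_X)$ is the one transported from topological K-theory via the Chern character, while on the commutative side the rational structure is $H^*(X^{an},\mathbb{Q})$ up to the powers of $2\pi i$ recorded in the sign formula. I would argue that the $\sqrt{\td}$-twisted HKR map is precisely the Mukai-vector normalization of the topological Chern character, so that $\tJ$ carries the K-theoretic lattice into rational de Rham cohomology; this is a semi-infinite upgrade of Grothendieck-Riemann-Roch. To deduce the same for $\tK$, observe that $\td'(X) = e^{-c_1(X)/2}\td(X)$, hence $\tK$ and $\tJ$ differ by wedging with $e^{-c_1(X)/4}$, which is multiplication by a class in $H^{even}(X,\mathbb{Q})$ and therefore an automorphism of the rational structure; thus part (2) for $\tK$ follows formally from the case of $\tJ$.

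\textbf{Part (3).} This I expect to be the main obstacle. Working in a smooth family, I would compare the Getzler connection on the periodic/negative cyclic homology of the family with the Gauss-Manin connection on relative de Rham cohomology. The chain-level HKR map is horizontal only up to an explicit defect coming from the curvature of Kontsevich's formality, and the content is that the $\sqrt{\td'}$-twist cancels this defect in both the base direction and the irregular $u\partial_u$-direction. The decisive structural input is that $\td'(z)=z/(e^{z/2}-e^{-z/2})$ is even, $\td'(-z)=\td'(z)$, which is exactly what is needed for compatibility with the $u\mapsto -u$ symmetry built into the Getzler connection; the non-symmetric $\td$ fails precisely at this point, explaining why only $\tK$ appears in (3). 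Concretely I would follow the strategy of Calaque-Rossi-Van den Bergh, reducing horizontality in the base direction to their formality computation and then checking the $u$-direction by a direct calculation with the Euler/grading operator, where the evenness of $\td'$ is what forces the two connection operators to agree.
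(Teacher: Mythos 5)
Your treatment of part (2) coincides with the paper's: quote Blanc's theorem that the HKR map respects the rational structures and observe that $\sqrt{\td(X)}$ and $\sqrt{\td'(X)}$ are rational classes, so the twists are automorphisms of the rational lattice. That part is fine.

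For part (1) there is a genuine gap in your reduction. You propose to test the identity against its $u^0$-term and then invoke Ramadoss/Calaque--Rossi--Van den Bergh for the Mukai pairing. But a $\mathbb{C}[[u]]$-sesquilinear pairing on a free $\mathbb{C}[[u]]$-module is \emph{not} determined by its reduction modulo $u$: two such pairings can agree at $u=0$ and differ by an arbitrary multiple of $u$. So the classical C\u ald\u araru statement does not ``propagate in the $u$-direction'' for free; one needs the full chain-level computation of the pairing on negative cyclic homology. This is exactly why the paper does not reprove (1) but instead quotes the Kim--Kim formula for Shklyarov's canonical pairing, $\langle\alpha,\beta\rangle_{\sf can}=(-1)^{n(n+1)/2}\int_X I_\HKR(\alpha)\wedge I_\HKR(\beta)\wedge\td(X)$, and obtains (1) by precomposing with $\id\otimes\vee$ and using $\sqrt{\td}\wedge\sqrt{\td}=\td$. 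Your algebraic manipulation of the Todd factors and the sign bookkeeping via Equation~\eqref{eq:signs} are consistent with this, but the input you would need is the $\mathbb{C}[[u]]$-level theorem, not its $u=0$ shadow.

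Part (3) is the actual content of the paper, and here your proposed mechanism is not the one that works. The paper globalizes the Shoikhet formality map by formal geometry (Fedosov resolutions, with the local pieces glued because $Q_U-Q_V$ is a linear fiberwise vector field), deforms everything by the Kodaira--Spencer Maurer--Cartan element $\xi(t)$ of the family, and then invokes the Cattaneo--Felder--Willwacher homotopy: the failure of $\cU^{\sf Sh}_{Q,\lambda_*\xi}$ to intertwine $u\nabla^{\sf Get}$ with $u\nabla^{\sf GM}$ is an exact term plus a defect of the form $\pm\frac12 d_{DR}\iota(\frac{d(\lambda_*\xi)}{dt})d_{DR}$, and this defect dies in cohomology because Hodge-to-de Rham degeneration makes $d_{DR}$ act by zero there. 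It is \emph{not} cancelled by the $\sqrt{\td'}$-twist, as you assert; and the reason $\tK$ rather than $\tJ$ appears is that the Calaque--Rossi computation identifies $\pi\circ\cU_Q\circ\lambda$ with $I^\HKR\circ\sqrt{\td'(X)}$ (the Duflo-type series $\frac{z}{e^{z/2}-e^{-z/2}}$ comes out of the formality graph weights), not the evenness of $\td'$ under $z\mapsto -z$. Note also that the statement only concerns the base directions $\partial/\partial t_j$, so the $u\partial_u$-direction you flag as the decisive check is not required. As written, your key step for (3) is an assertion rather than an argument, and the assertion points at the wrong cancellation mechanism.
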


\begin{remark}
We make a few miscellaneous comments on conventions, signs and factors used in the literature that could be confusing from a first reading. Indeed,  Part (1.) is recently proved in~\cite{KK} which is based on~\cite{Kim}. Note that the formula obtained therein describes the so-called canonical pairing $\langle-,-\rangle_{{\sf can}}: HC^-(\cC_X) \otimes HC^-(\cC_X^{\sf op}) \rightarrow \mathbb{C}[[u]]$ (defined by Shklyarov~\cite{Shk}) by the following formula:
\[ \langle\alpha,\beta\rangle_{{\sf can}}= (-1)^{n(n+1)/2} \int_X I_\HKR(\alpha)\wedge I_\HKR(\beta)\wedge \td(X).\]
It implies $(1.)$ by precomposing with $\id\otimes \vee$. In Ramadoss~\cite{Ram}, a different trace map is used, which differs from our $\int_X$ by the sign $(-1)^{n(n+1)/2}$. Finally, throughout the literature the non-commuative Chern character (and hence the definition of Todd class $\td(X)$) differs from the usual Chern character by a factor $(-\frac{1}{2\pi i})^p$ at the $(p,p)$-component. This is compatible with Equation~\eqref{eq:signs} to deduce classical Hirzebruch-Riemann-Roch formula from (1.)~\cite{KK,Kim}. 
\end{remark}

\begin{remark}
Part (2.) is proved by Blanc~\cite{Bla}. Indeed, Blanc gave a purely categorical construction of topological K-theory of $X$, and showed that the HKR isomorphism intertwines the rational structures on $HP_*(\cC_X)$ and $H^*(X,\mathbb{C})$. We only need to observe that both $\sqrt{\td(X)}$ and $\sqrt{\td'(X)}$ are rational classes, hence $\tJ$ and $\tK$ still preserve the rational structure.
\end{remark}

\begin{remark}
    Note that the two Todd classes are related by $\td'(X)=\td(X) \exp (-c_1/2)$, in the Calabi-Yau case we obtain $\tJ=\tK$. 
    In particular, the theorem above verifies a conjecture of Ganatra-Perutz-Sheridan~\cite{GPS} that $\tJ$ is an isomorphism of polarized Variation of Semi-infinite Hodge Structures (VSHS). Furthermore, in the Calabi-Yau case, since $\td(X)$ is concentrated at $(2l,2l)$ components, Part (1.) of the theorem above implies the symmetry property of the higher residue pairing:
    \[\langle \alpha, \beta \rangle_{\sf hres} = (-1)^{n+ |\alpha||\beta|} \langle \beta,\alpha \rangle_{\sf hres}. \]
\end{remark}

\paragraph{{\bf Calaque-Rossi-Van den Bergh's proof of C\u ald\u araru's conjecture.}} By the previous remarks, it remains to prove part $(3.)$ of Theorem~\ref{thm:main}. Since the proof is along the same way as that of the C\u ald\u araru's conjecture in~\cite{CalRos,CRV}. We first briefly sketch the proof in {{\sl Loc. Cit.}}. The main idea is to use formal geometry~\cite{Fed,Dol} and fiberwise apply Kontsevich's and Shoikhet's formality map. Following~\cite{CalRos,CRV}, we introduce some notations:
\begin{itemize}
    \item Denote by $\cT_\poly$ the bundle of fiberwise holomorphic poly-vector fields on $X$.
    \item Denote by $\cA_\poly$ the bundle of fiberwise holomorphic differential forms on $X$.
    \item Denote by $D_\poly$ the bundle of holomorphic poly-differential operators on $X$, and $\cD_\poly$ the bundle of fiberwise holomorphic poly-differential operators on $X$.
    \item Denote by $\widehat{C}_\bullet$ the sheaf of continuous Hochschild chains on $X$, and $\widehat{\cC}_\bullet$ its fiberwise version.
\end{itemize}
It is well-known that these fiberwise bundles all admit a Fedosov~\cite{Fed} connection $D$ such that the associated de Rham complex of $D$ is a resolution of the corresponding non-fiberwise bundles. For example, in the case of poly-vector fields, there are quasi-isomorphisms
\[ \lambda:\big( \Omega^{0,*}(X,T_\poly), \dbar\big) \rightarrow   \big( \Omega(X,\cT_\poly), D \big), \;\; \pi:  \big( \Omega(X,\cT_\poly), D \big) \rightarrow \big( \Omega^{0,*}(X,T_\poly), \dbar\big).\]
The maps  $\lambda$, $\pi$ are the ``Taylor series" map and ``Projection to constant term" map respectively. One may verify both $\lambda$ and $\pi$ respects all existing algebraic structures on both sides, see~\cite[Section 10.2]{CalRos-book}.

Then, the desired comparison map $K^*: H^*(X,T_\poly) \rightarrow HH^*(X)$, in the chain level, is given by composing along the following diagram.

\begin{equation}\label{diag:dgla}
    \begin{CD}
    \big( \Omega(X,\cT_\poly), D \big) @> \cU_Q >> \big( \Omega(X,\cD_\poly), D  +d_H\big) \\
    @A \lambda AA        @VV \pi V \\
    \big( \Omega^{0,*}(X,T_\poly), \dbar\big) @> K^* >> \big( \Omega^{0,*}(X,D_\poly), \dbar+ d_H\big)
\end{CD}
\end{equation}

In the diagram above, we need to explain the construction of the map $\cU_Q$. Let us sketch this construction following~\cite{CalRos-book}. Let $U\subset X$ be a holomorphic coordinate chart. Then the Fedosov connection $D$ on both bundles $\cT_\poly$ is of the form
\[ D= \partial + \dbar + Q_U,\]
for some fiberwise vector fields valued one-form $Q_U\in \Omega^1(U,\cT)$. Then, on the open subset $U$, the map $\cU_Q$ is defined by 
\[ \cU_{Q_U} (\alpha)  := \sum_n \frac{1}{n!}\cU_{1+n}(\alpha,Q_U^{\otimes n}), \]
where $\cU: \cT_\poly \rightarrow \cD_\poly$ is the fiberwise Kontsevich formality map~\cite{Kon}. The upshot is that on a different chart $V$, the difference of the two fiberwise vector fields $Q_U-Q_V$ is a linear fiberwise vector field on $U\cap V$, which implies that we have
\begin{align*}
    \cU_{Q_U} (\alpha) & = \sum_n \frac{1}{n!}\cU_{1+n}(\alpha,Q_U^{\otimes n})\\
    & = \sum_n \frac{1}{n!}\cU_{1+n}(\alpha,Q_V^{\otimes n})\\
    & = \cU_{Q_V} (\alpha).
\end{align*}
In the second equality, we used the fact that Kontsevich's formality map vanishes whenever one of the input entry is a linear vector field. This calculation shows that we obtain a well-defined global map giving the desired chain map $\cU_Q: \big( \Omega(X,\cT_\poly), D \big) \rightarrow \big( \Omega(X,\cD_\poly), D \big)$. It is then a genius calculation~\cite{CalRos} to verify that indeed the induced map in cohomology of the composition $K^*:=\pi \cU_Q \lambda$ is indeed equal to $I^\HKR \circ \sqrt{\td'(X)}$.

Similar construction works in the comparison between $HH_*(X)$ and $H^*(X,A_\poly)$. In this case, we have the following diagram:
\begin{equation}\label{diag:hh}
\begin{CD}
    \big( \Omega(X,\widehat{\cC}_\bullet), D + b \big) @> \cU^{{\sf Sh}}_Q >> \big( \Omega(X,\cA_\poly), D \big) \\
    @A \lambda AA        @VV \pi V \\
    \big( \Omega^{0,*}(X,\widehat{C}_\bullet), \dbar+b\big) @> K_* >> \big( \Omega^{0,*}(X,A_\poly), \dbar\big)
\end{CD}
\end{equation}
Note that the comparison map is in the opposite direction $K_*: HH_*(X) \rightarrow H^*(X,A_\poly)$. Let us also briefly sketch the construction of the map $\cU^{{\sf Sh}}_Q$. Over a local holomorphic chart $U$, the element $Q_U$ acts on $\widehat{\cC}_\bullet)$ and $\cA_\poly$ by the Lie derivative action. Hence locally on $U$, we may use $Q_U$ to deform the Tsygan's formality map constructed by Shoihket~\cite{Sho}. This deformed map is given by
\[ \cU^{{\sf Sh}}_{Q_U} (-) := \sum_n \frac{1}{n!}\cU^{{\sf Sh}}_{1+n}(-,Q_U^{\otimes n}), \]
where $\cU^{{\sf Sh}}:\widehat{\cC}_\bullet \rightarrow \cA_\poly$ is the original fiberwise Tsygan's formality map~\cite{Sho}. Again, one can show these locally defined maps agree on overlaps, which yields a globally defined map $\cU_Q^{{\sf Sh}} : \big( \Omega(X,\widehat{\cC}_\bullet), D \big) \rightarrow \big( \Omega(X,\cA_\poly), D \big) $.

\paragraph{{\bf Finishing the proof.}} We need to set up the context to compare the Gauss-Manin connection with the Getzler connection. Indeed, let us consider a family of complex analytic deformations of $X$, i.e. a proper and submersive map $p: \mathfrak{X} \rightarrow \Delta$ from a complex manifold $\mathfrak{X}$ to a small open ball $\Delta\subset \mathbb{C}^\mu$ centered at the origin. We require that $p^{-1}(0)=X$. We think of $\mathfrak{X}$ as the total space of a deformation of $X$ parametrized by the open ball $\Delta$. 

By Ehresmann’s fibration theorem, one can always find a {\sl transversely holomorphic trialization}, i.e. a diffeomorphism
\[ (\phi, p): \mathfrak{X} \rightarrow X\times \Delta,\]
such that $\phi|_{p^{-1}(0)} = \id$, and fibers of $\phi: \mathfrak{X}\rightarrow X$ are holomorphic submanifolds transverse to $X$. 

Thus, we may push-forward the complex structure of $\mathfrak{X}$ to that of $X\times \Delta$ using the diffeomorphism $(\phi,p)$. Since in the transverse direction the fibers are always biholomorphic to $\Delta$, the complex structure is not changed in the $\Delta$ direction. After shrinking $\Delta$ if needed, the push-forward complex structure is equivalent to the following data
\[ \xi(t_1,\ldots,t_\mu) \in \Omega^{0,1}(X, T^{1,0}_X)\hat{\otimes} \mathcal{O}_\Delta,\]
with $t_1,\ldots,t_\mu$ holomorphic coordinates on $\Delta$ and that $\xi(0)=0$. Furthermore, the integrability of the complex structure is equivalent to the Maurer-Cartan equation
\[ \dbar \xi + \frac{1}{2}[ \xi, \xi] =0.\]
Now, back to the proof of $(3.)$ in Theorem~\ref{thm:main}, we may use the map $\lambda$ in Diagram~\eqref{diag:dgla} to push-forward $\xi(t)$ we obtain Maurer-Cartan elements $\lambda_*\xi$ which, as in the case of $Q_U$, yields also a fiberwise vector field valued one form. Deforming Diagram~\eqref{diag:hh} using the Maurer-Cartan elements $\xi(t)$ and $\lambda_*\xi(t)$ yields a new diagram:

\begin{equation}~\label{diag:deformation}
\begin{CD}
    \big( \Omega(X,\widehat{\cC}_\bullet)\hat{\otimes} \mathcal{O}_\Delta, D + b + L_{\lambda_*\xi} \big) @> \cU^{{\sf Sh}}_{Q,\lambda_*\xi} >> \big( \Omega(X,\cA_\poly)\hat{\otimes} \mathcal{O}_\Delta, D + L_{\lambda_*\xi} \big) \\
    @A \lambda AA        @VV \pi V \\
    \big( \Omega^{0,*}(X,\widehat{C}_\bullet)\hat{\otimes} \mathcal{O}_\Delta, \dbar+b+L_{\xi} \big) @> K_* >> \big( \Omega^{0,*}(X,A_\poly)\hat{\otimes} \mathcal{O}_\Delta, \dbar+L_\xi \big)
\end{CD}
\end{equation}

Here $\cU^{{\sf Sh}}_{Q,\lambda_*\xi}$ is the deformed Tsygan's formality map further deformed by the Maurer-Cartan element $\lambda_*\xi$. Explicitly, locally in a holomorphic coordinate chart on $U\subset X$, it is given by
\[ \cU^{{\sf Sh}}_{Q_U,\lambda_*\xi} (-) := \sum_{n,m} \frac{1}{n!m!}\cU^{{\sf Sh}}_{1+n+m}(-,Q_U^{\otimes n},(\lambda_*\xi)^{\otimes m}). \]
Since $\lambda_*\xi$ is globally defined, on another such chart $V$, we still have $ \cU^{{\sf Sh}}_{Q_U,\lambda_*\xi}= \cU^{{\sf Sh}}_{Q_V,\lambda_*\xi}$ because
$Q_U-Q_V$ is a linear fiberwise vector field on $U\cap V$. Thus we obtain a globally defined map $\cU^{{\sf Sh}}_{Q,\lambda_*\xi}$.

\medskip
Furthermore, it is proved by Willwatcher~\cite{Wil} that the Shoikhet formality map intertwines the Connes differential with the de Rham differential. Hence the above diagram extends to the following diagram:
\begin{equation} 
\begin{CD}
    \big( \Omega(X,\widehat{\cC}_\bullet)\hat{\otimes} \mathcal{O}_\Delta[[u]], D + b +uB+ L_{\lambda_*\xi} \big) @> \cU^{{\sf Sh}}_{Q,\lambda_*\xi} >> \big( \Omega(X,\cA_\poly)\hat{\otimes} \mathcal{O}_\Delta[[u]], D + ud_{DR}+L_{\lambda_*\xi} \big) \\
    @A \lambda AA        @VV \pi V \\
    \big( \Omega^{0,*}(X,\widehat{C}_\bullet)\hat{\otimes} \mathcal{O}_\Delta[[u]], \dbar+b+uB+L_{\xi} \big) @> \tK >> \big( \Omega^{0,*}(X,A_\poly)\hat{\otimes} \mathcal{O}_\Delta[[u]], \dbar+ud_{DR}+L_\xi \big)
\end{CD}
\end{equation}
Note that by definition $\tK:= \pi \circ  \cU^{{\sf Sh}}_{Q,\lambda_*\xi} \circ \lambda$.

There are naturally defined connection operators in the $t$-variables direction on the complexes in the above diagram, essentially all reflecting the fact that they are all trivialized in the $t$-direction. For example, let us consider the complex $\big( \Omega^{0,*}(X,A_\poly)\hat{\otimes} \mathcal{O}_\Delta[[u]], \dbar+ud_{DR}+L_\xi \big)$ in the lower right corner. Then we may define a connection operator
using the Maurer-Cartan element $\xi(t_1,\ldots,t_\mu)$ on it by 
\[ \nabla_{\frac{\partial}{\partial t_j}}^{{\sf GM}} := \frac{\partial}{\partial t_j} - \frac{\iota(\frac{\partial\xi}{\partial t_j})}{u}. \]
Similarly, on its fiberwise bundle we have the connection still denoted by $\nabla_{\frac{\partial}{\partial t_j}}^{{\sf GM}}:= \frac{\partial}{\partial t_j} - \frac{\iota(\frac{\partial\lambda_*\xi}{\partial t_j})}{u}$.
On the non-commutative side (i.e. the left column), following Getlzer~\cite{Get} we have 
\[ \nabla_{\frac{\partial}{\partial t_j}}^{{\sf Get}} := \frac{\partial}{\partial t_j} - \frac{\iota(\frac{\partial\xi}{\partial t_j})}{u},\]
with $\iota(\frac{\partial\xi}{\partial t_j})=b^{1|1}(\frac{\partial\xi}{\partial t_j}))+uB^{1|1}(\frac{\partial\xi}{\partial t_j}))$ by the non-commutative Calculus structure. The notations used here borrows that of Sheridan~\cite{She}. Again, we use the same notation for its fiberwise version $\nabla_{\frac{\partial}{\partial t_j}}^{{\sf Get}}:= \frac{\partial}{\partial t_j} - \frac{\iota(\frac{\partial\lambda_*\xi}{\partial t_j})}{u}$.

\begin{lemma}
The map $\tK$ intertwines the Getzler connection with the Gauss-Manin connection after taking cohomology.
\end{lemma}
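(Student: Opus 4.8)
The plan is to establish the operator identity $\nabla^{\sf GM}_{\partial/\partial t_j}\circ\tK=\tK\circ\nabla^{\sf Get}_{\partial/\partial t_j}$ on cohomology by reducing it, via the fiberwise resolution, to a single Cartan-type homotopy formula for the deformed Shoikhet map. First I would pass to the fiberwise level: both connections have the shape $\frac{\partial}{\partial t_j}-\frac{1}{u}\iota(\cdot)$, the maps $\lambda$ and $\pi$ carry no $t$-dependence, and by~\cite[Section 10.2]{CalRos-book} they respect the full calculus structure, so $\lambda$ sends the commutative contraction by $\frac{\partial\xi}{\partial t_j}$ to the fiberwise contraction by $\frac{\partial\lambda_*\xi}{\partial t_j}$, and similarly for $\pi$. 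Thus $\lambda$ and $\pi$ intertwine the lower and upper connections strictly, and since $\tK=\pi\circ\cU^{\sf Sh}_{Q,\lambda_*\xi}\circ\lambda$ and both maps are cohomology isomorphisms, it suffices to prove that $\cU^{\sf Sh}_{Q,\lambda_*\xi}$ intertwines the two fiberwise connections after taking cohomology.

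Next I would expand $\nabla^{\sf GM}_{\partial/\partial t_j}\circ\cU^{\sf Sh}_{Q,\lambda_*\xi}-\cU^{\sf Sh}_{Q,\lambda_*\xi}\circ\nabla^{\sf Get}_{\partial/\partial t_j}$. Since only $\lambda_*\xi$ carries $t$-dependence, the two $\frac{\partial}{\partial t_j}$ terms collapse to $\big[\frac{\partial}{\partial t_j},\cU^{\sf Sh}_{Q,\lambda_*\xi}\big]=\frac{\partial}{\partial t_j}\cU^{\sf Sh}_{Q,\lambda_*\xi}$. Differentiating the local formula $\cU^{\sf Sh}_{Q_U,\lambda_*\xi}(-)=\sum_{n,m}\frac{1}{n!m!}\cU^{\sf Sh}_{1+n+m}(-,Q_U^{\otimes n},(\lambda_*\xi)^{\otimes m})$ in $t_j$ identifies this commutator with the same map carrying one extra input frozen at the Kodaira--Spencer class $\frac{\partial\lambda_*\xi}{\partial t_j}$. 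Writing $\iota_j$ for contraction by that class---the commutative contraction on the right-hand complex and $b^{1|1}(\frac{\partial\lambda_*\xi}{\partial t_j})+uB^{1|1}(\frac{\partial\lambda_*\xi}{\partial t_j})$ on the left-hand one---the whole claim reduces to the insertion identity, valid on cohomology,
\[
\Big[\tfrac{\partial}{\partial t_j},\,\cU^{\sf Sh}_{Q,\lambda_*\xi}\Big]=\tfrac{1}{u}\Big(\iota_j\,\cU^{\sf Sh}_{Q,\lambda_*\xi}-\cU^{\sf Sh}_{Q,\lambda_*\xi}\,\iota_j\Big).
\]

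The hard part will be this last identity, the exact analogue in the deformed setting of the ``genius calculation'' of~\cite{CRV,CalRos}. I would derive it from the $L_\infty$/morphism-of-calculi relations for Shoikhet's map~\cite{Sho} deformed by the Maurer--Cartan element $\lambda_*\xi$: the equation $D(\lambda_*\xi)+\frac12[\lambda_*\xi,\lambda_*\xi]=0$ makes $D+b+uB+L_{\lambda_*\xi}$ and $D+ud_{DR}+L_{\lambda_*\xi}$ genuine differentials, and the morphism-of-calculi property re-expresses the failure of $\cU^{\sf Sh}_{Q,\lambda_*\xi}$ to commute strictly with $\iota_j$ as a commutator with these differentials, i.e. as a chain homotopy that dies on cohomology. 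The $u$-linear halves are matched by the result of~\cite{Wil} that $\cU^{\sf Sh}$ intertwines the Connes operator $B$ with $d_{DR}$, while the remaining pieces are governed by the non-commutative Cartan homotopy formula relating $\iota_j$, the Lie derivative $L_{\partial\lambda_*\xi/\partial t_j}$ and $b+uB$, in the sense of Getzler~\cite{Get}. The genuinely delicate bookkeeping---just as in~\cite{CRV}---is the tracking of signs and of the $u$-weights, so that the $b^{1|1}$ and $uB^{1|1}$ contributions balance against the explicit $\frac1u$ in the connection and all error terms are indeed $(D+b+uB+L_{\lambda_*\xi})$-exact.
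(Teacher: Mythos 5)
Your reduction to the fiberwise level via $\lambda$ and $\pi$, and the identification of $\big[\frac{\partial}{\partial t_j},\cU^{\sf Sh}_{Q,\lambda_*\xi}\big]$ with insertion of the Kodaira--Spencer class, match the paper's strategy. But there are two genuine gaps in the step you label as ``the hard part''. First, the needed homotopy is not a formal consequence of the $L_\infty$/calculus relations for Shoikhet's map: the paper invokes the specific homotopy operator of Cattaneo--Felder--Willwacher~\cite{CFW}, built from configuration-space integrals over the spaces $E_{k+2,n+1}$ where the extra interior point (carrying $\frac{\partial(\lambda_*\xi)}{\partial t_j}$) is constrained either to the segment from the center to the boundary point $\overline{0}$, or to the angular sector between the framing and $\overline{0}$; this $H=H_0+uH_1$ is the actual content of the argument, and one must also check (as the paper does) that the locally defined $H_U$ glue because $Q_U-Q_V$ is a linear fiberwise vector field. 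Your proposal treats this as bookkeeping, when it is the main input.

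Second, and more seriously, your assumption that ``all error terms are $(D+b+uB+L_{\lambda_*\xi})$-exact'' is false at the chain level. The identity of~\cite[Proposition 1.4 and Theorem 4.1]{CFW} produces, in addition to the homotopy terms, the anomalous contribution $\pm\frac{1}{2}\,d_{DR}\,\iota\big(\frac{\partial(\lambda_*\xi)}{\partial t_j}\big)\,d_{DR}$ on the Gauss--Manin side, which survives to cohomology and is not killed by the homotopy. The paper disposes of it by a separate argument: since $X$ is smooth and proper, Hodge-to-de-Rham degeneration forces $d_{DR}$ to act by zero on the cohomology of $\big(\Omega^{0,*}(X,A_\poly)[[u]],\dbar+ud_{DR}+L_\xi\big)$. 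This is the one place where properness of $X$ enters the proof of part (3), and your argument as written would not close without it.
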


\begin{proof}
Since both $\lambda$ and $\pi$ respects all existing algebraic structures on both sides, see~\cite[Section 10.2]{CalRos-book}, it follows that the maps $\lambda$ and $\pi$ intertwine the connection operators. Thus, we need to prove that 
\[ \nabla_{\frac{\partial}{\partial t}}^{{\sf GM}} \circ\cU^{{\sf Sh}}_{Q,\lambda_*\xi} - \cU^{{\sf Sh}}_{Q,\lambda_*\xi}\circ \nabla^{{\sf Get}}_{\frac{\partial}{\partial t}}=0,\]
after taking cohomology. 

On a local chart $U$, we can acheive this by a homotopy operator constructed by Cattaneo-Felder-Willwacher~\cite{CFW}. Indeed, consider the configuration space $E_{k+2,n+1}$ of $k+2$ interior marked points $(z_0,\cdots,z_{k+1})$, and $n+1$ boundary marked points $(z_{\overline{0}}, \cdots, z_{\overline{n}})$ on the unit disk, such that the first two interior points are such that $z_0=(0,0)$ and $z_1=(r,0)$ with $r\in [0,1]$. The notion of admissible graphs are defined in the same way as before. For each admissible graph, set
\[ c_\Gamma:= \int_{E_{k+2,n+1}} \bigwedge_{e\in E_\Gamma} d\theta_e.\]
Define an operator $H_{0,U}:\big( \Omega(X,\widehat{\cC}_\bullet)[[u]], d+  b +uB+ L_{Q_U +\lambda_*\xi} \big) \rightarrow  \big( \Omega(X,\cA_\poly)[[u]], d+ ud_{DR}+L_{Q_U + \lambda_*\xi} \big)$ by setting
\[ H_{0,U}(a_0|a_1|\cdots|a_n):= \sum_{k\geq 0, \Gamma \in E_{k+2,n+1}} \frac{1}{k!} c_\Gamma\cdot \cU^{\sf Sh}_\Gamma(\frac{d(\lambda_*\xi)}{dt},(\lambda_*\xi+Q_U)^{\otimes k}; a_0|a_1|\cdots|a_n). \]

The operator $\cU^{\sf Sh}_\Gamma(\frac{d(\lambda_*\xi)}{dt},(\lambda_*\xi+Q_U)^{\otimes k}; a_0|a_1|\cdots|a_n)$ (defined in the same way as in Shoikhet's formula) is illustrated in the following diagram:
\[ \begin{tikzpicture}[baseline={(current bounding box.center)},scale=0.4]
\draw (0,0) circle (5);
\node[circle,fill=black,inner sep=0pt,minimum size=2pt,label=right:{$a_0$}] at (5,0) {};
\node[circle,fill=black,inner sep=0pt,minimum size=2pt,label=right:{$a_1$}] at (4.33,2.5) {};
\node[circle,fill=black,inner sep=0pt,minimum size=2pt,label=right:{$a_n$}] at (4.33,-2.5) {};
\draw (0,0) node[cross=5pt] {};
\node[circle,fill=blue,inner sep=0pt,minimum size=5pt,label=above:{\footnotesize $\frac{d(\lambda_*\xi)}{dt}$}] at (2,0) {};
\node[circle,fill=black,inner sep=0pt,minimum size=5pt,label=above:{\footnotesize $\lambda_*\xi+Q_U$}] at (-1,3) {};
\node[circle,fill=black,inner sep=0pt,minimum size=5pt,label=above:{\footnotesize $\lambda_*\xi+Q_U$}] at (-3,-2) {};
\node[circle,fill=black,inner sep=0pt,minimum size=5pt,label=above:{\footnotesize $\lambda_*\xi+Q_U$}] at (3,-2) {};
\draw[dashed] (0,0) to (5,0);
\end{tikzpicture}\]
Define another operator $H_{1,U}$ in a similar way with the first marked point (corresponding to the point where $\frac{d(\lambda_*\xi)}{dt}$ is inserted) constrainted between the framing of the central point $(0,0)$ and the boundary marked point $\overline{0}$. A typical configuration is illustrated in the following picture.
\[ \begin{tikzpicture}[baseline={(current bounding box.center)},scale=0.4]
\draw (0,0) circle (5);
\node[circle,fill=black,inner sep=0pt,minimum size=2pt,label=right:{ $1$}] at (5,0) {};
\node[circle,fill=black,inner sep=0pt,minimum size=2pt,label=right:{$a_i$}] at (4.33,2.5) {};
\node[circle,fill=black,inner sep=0pt,minimum size=2pt,label=right:{$a_{i-1}$}] at (4.33,-2.5) {};
\node[circle,fill=black,inner sep=0pt,minimum size=2pt,label=above:{$a_0$}] at (-2.5,4.33) {};
\draw (0,0) node[cross=5pt] {};
\draw[dashed] (0,0) to (-2.5,4.33);
\draw[dashed] (0,0) to (5,0);
\node[circle,fill=blue,inner sep=0pt,minimum size=5pt,label=above:{\footnotesize $\frac{d(\lambda_*\xi)}{dt}$}] at (1.5,2) {};
\node[circle,fill=black,inner sep=0pt,minimum size=5pt,label=above:{\footnotesize $\lambda_*\xi+Q_U$}] at (-2.5,2.3) {};
\node[circle,fill=black,inner sep=0pt,minimum size=5pt,label=above:{\footnotesize $\lambda_*\xi+Q_U$}] at (-3,-2) {};
\node[circle,fill=black,inner sep=0pt,minimum size=5pt,label=above:{\footnotesize $\lambda_*\xi+Q_U$}] at (2.5,-3.7) {};
\end{tikzpicture}\]
Then the following identity was proved in~\cite[Proposition 1.4 and Theorem 4.1]{CFW}:
\begin{align*} 
& u\big(\nabla_{\frac{\partial}{\partial t}}^{{\sf GM}}\pm \frac{1}{2} d_{DR}\iota(\frac{d(\lambda_*\xi)}{dt})d_{DR}\big)\circ \mathcal{U}^{\sf Sh}_{Q_U,\lambda_*\xi} - \mathcal{U}^{\sf Sh}_{Q_U,\lambda_*\xi}\circ (u\nabla^{{\sf Get}}_{\frac{\partial}{\partial t}})\\
& =  H_U \circ (d+b+uB+L_{Q_U+\lambda_*\xi}) + (L_{Q_U+\lambda_*\xi}+d+ud_{DR})\circ H_U,
\end{align*}
where $H_U=H_{0,U}+uH_{1,U}$. 

Furthermore, since $\xi$ is global, and $Q_U-Q_V$ is a linear fiberwise vector field on $U\cap V$, we have $H_U= H_V$ on the intersection $U\cap V$. Hence the locally defined Homotopy operators glue to form a globally well-defined map $H$ such that 
\begin{align*} 
& u\big(\nabla_{\frac{\partial}{\partial t}}^{{\sf GM}}\pm \frac{1}{2} d_{DR}\iota(\frac{d(\lambda_*\xi)}{dt})d_{DR}\big)\circ \mathcal{U}^{\sf Sh}_{Q,\lambda_*\xi} - \mathcal{U}^{\sf Sh}_{Q,\lambda_*\xi}\circ (u\nabla^{{\sf Get}}_{\frac{\partial}{\partial t}})\\
& =  H \circ (D+b+uB+L_{\lambda_*\xi}) + (D+L_{\lambda_*\xi}+ud_{DR})\circ H.
\end{align*}
This implies, after taking cohomology, we obtain the identity 
\[ u\big(\nabla_{\frac{\partial}{\partial t}}^{{\sf GM}}\pm \frac{1}{2} d_{DR}\iota(\frac{d(\lambda_*\xi)}{dt})d_{DR}\big)\circ \mathcal{U}^{\sf Sh}_{Q,\lambda_*\xi} = \mathcal{U}^{\sf Sh}_{Q,\lambda_*\xi}\circ (u\nabla^{{\sf Get}}_{\frac{\partial}{\partial t}}). \]
But since $X$ is smooth and proper, it satisfies Hodge-to-de-Rham degeneration, hence in cohomology the operator $d_{DR}$ acts by zero, which then implies our desired identity
\[ (u\nabla_{\frac{\partial}{\partial t}}^{{\sf GM}})\circ \mathcal{U}^{\sf Sh}_{Q,\lambda_*\xi} = \mathcal{U}^{\sf Sh}_{Q,\lambda_*\xi}\circ (u\nabla^{{\sf Get}}_{\frac{\partial}{\partial t}}).\]
\end{proof}

Finally, we observe that the map $\tK$, when restricted to $t\in \Delta$, is indeed given by $\sqrt{\td'(X_t)}\circ \tI_\HKR$. This is because in Diagram~\ref{diag:deformation} the complex structure deformation $\xi(t)$ may be absorbed into the $\dbar$ operator so that $\dbar_\xi:=\dbar+L_{\xi(t)}$ is the Dolbeault operator on the deformed space $X_t$. Thus, we have an isomorphic diagram
\begin{equation}~\label{diag:deformation2}
\begin{CD}
    \big( \Omega(X_t,\widehat{\cC}_\bullet)\hat{\otimes} \mathcal{O}_\Delta, D_t + b \big) @> \cU^{{\sf Sh}}_{Q_t} >> \big( \Omega(X_t,(\cA_t)_\poly)\hat{\otimes} \mathcal{O}_\Delta, D_t \big) \\
    @A \lambda AA        @VV \pi V \\
    \big( \Omega^{0,*}(X_t,\widehat{C}_\bullet)\hat{\otimes} \mathcal{O}_\Delta, \dbar_\xi+b \big) @> K_* >> \big( \Omega^{0,*}(X_t,(A_t)_\poly)\hat{\otimes} \mathcal{O}_\Delta, \dbar_\xi  \big)
\end{CD}
\end{equation}
where locally on $U\subset X_t$, we have a Fedesov operator of the form $D_t = \partial_t + \dbar_{\xi(t)} + Q_{U,t}$. This shows that we are in the same setup as that of Diagram~\ref{diag:hh}, in which case the computation of~\cite{CRV,CalRos} shows that in cohomology we have $\tK=\sqrt{\td'(X_t)}\circ \tI_\HKR$. This finishes the proof of part $(3.)$ of Theorem~\ref{thm:main}.

\medskip
\noindent {{\bf Acknowledgements.}} I would like to thank Sasha Polishchuk for bringing Bumsig Kim's work~\cite{Kim} to my attention. I am also very grateful to Taejung Kim to explain the signs used in the works~\cite{Kim, KK}, as well as sharing the reference~\cite{SasTon}. This work is partially supported by the NSFC grant 12071290.

\end{document}